\theoremstyle{plain}
\newtheorem{theorem}{Theorem}[section]
\newtheorem{prop}[theorem]{Proposition}
\newtheorem{cor}[theorem]{Corollary}
\newtheorem{conj}[theorem]{Conjecture}
\theoremstyle{definition}
\newtheorem{defn}{Definition}[section]
\title{Lower Bounds on Intersection Families for Certain Graphs}
\author[P. Hamrick]{Paul Hamrick}
\address{Department of Mathematics, University of North Carolina at Chapel Hill, Chapel Hill, NC 27599}
\email{hamri@unc.edu}
\author[G. Hu]{Gary Hu}
\address{Department of Mathematics, Williams College, Williamstown, MA 01267}
\email{gh7@williams.edu}
\begin{document}

\begin{abstract}
A family of graphs $\mathcal{F}$ is $H$-intersecting if the edge intersection of any two graphs in $\mathcal{F}$ contains a copy of a fixed graph $H$. A fundamental problem is to determine the maximum size of such a family. The trivial lower bound of $2^{\binom{n}{2} - e(H)}$ is known to be not sharp for some graphs, such as the $P_4$ graph, as shown by Christofides. This paper presents two main contributions. First, we introduce a general construction for $H$-intersecting families based on decompositions of complete multipartite graphs, yielding new lower bounds for $H = K_{s_1, \dots, s_{k-1}, t}$. We compare this construction to a result by Balogh and Linz, showing that our bound is valid for a substantially wider range of parameters (beginning at $t \ge 2^{\sum_i s_i}$) and provides a stronger numerical bound for a large interval where both constructions are applicable. Second, we conjecture the $\frac{17}{128}$ Christofides bound for $P_4$ is optimal, which would resolve the Alon-Spencer conjecture. We computationally verify this density is optimal for families generated by connected 6-vertex host graphs with 7 or 8 edges.
\end{abstract}

\maketitle

\section{Introduction}

\begin{defn}
    A family of graphs $\mathcal{F}$, whose members are subgraphs of the complete graph $K_n$, is said to be \textbf{$H$-intersecting} if for any two graphs $F, F' \in \mathcal{F}$, their edge intersection $F \cap F'$ contains a subgraph isomorphic to $H$. 
\end{defn}

One generalization of a classic result in extremal combinatorics, the Erd\H{o}s-Ko-Rado Theorem, is to determine $\max |\mathcal{F}|$, the maximum possible size of an $H$-intersecting family, for different choices of $H$. A trivial construction for an $H$-intersecting family involves fixing a specific copy of $H$ within $K_n$ and taking all $2^{\binom{n}{2} - e(H)}$ supergraphs. This establishes a baseline lower bound of $2^{\binom{n}{2} - e(H)}$. 

For some graphs, this bound is sharp. A notable example is $H=K_3$, for which a conjecture by Simonovits and Sós stating this optimality was proven by Ellis, Filmus, and Friedgut \cite{ellis-filmus-friedgut}. However, this trivial bound is not always optimal. For the path on four vertices, $P_4$, the trivial bound is $2^{\binom{n}{2} - 3}$. This was also conjectured to be optimal by Simonovits and Sós \cite{chung-graham}.

\begin{conj}[Simonovits-Sós, Disproven]
If $\mathcal{F}$ is a $P_4$-intersecting family of labeled subgraphs of $K_n$, then $|\mathcal{F}| \le 2^{\binom n2 - 3}$.
\end{conj}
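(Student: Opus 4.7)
The plan is to disprove the conjecture by constructing an explicit $P_4$-intersecting family whose density $|\mathcal{F}|/2^{\binom{n}{2}}$ exceeds $1/8 = 2^{-3}$. I would use the standard ``host-graph lift'': fix a small graph $H$ on a constant subset of the vertices of $K_n$ together with a collection $\mathcal{C}$ of subgraphs of $H$, and define
\[ \mathcal{F}(\mathcal{C}) := \{F \subseteq E(K_n) : F \cap E(H) \in \mathcal{C}\}. \]
Then $|\mathcal{F}(\mathcal{C})| = |\mathcal{C}| \cdot 2^{\binom{n}{2}-e(H)}$, and $\mathcal{F}(\mathcal{C})$ is $P_4$-intersecting if and only if for every $C, C' \in \mathcal{C}$ the graph $C \cap C'$ contains a copy of $P_4$ living inside $H$. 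The problem therefore collapses to finding a pair $(H, \mathcal{C})$ with density $|\mathcal{C}|/2^{e(H)} > 1/8$.

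To beat $1/8 = 16/128$ I would target $e(H) = 7$ and $|\mathcal{C}| \geq 17$, which the abstract identifies as the right regime. Restricting to connected $6$-vertex $7$-edge hosts, I would, for each candidate $H$: enumerate the copies of $P_4$ inside $H$; search for up-closed collections $\mathcal{C}$ in the subgraph lattice of $H$ whose \emph{minimal} elements pairwise share a $P_4$ (passing to the up-closure is free, since any supergraph of a $P_4$-intersecting graph is still $P_4$-intersecting); and verify $|\mathcal{C}| \geq 17$ by inclusion--exclusion over the minimal generators. Natural hosts to try include $6$-cycles with a chord, $K_4$ with two pendant edges, and $K_{2,3}$ with an extra vertex adjacent to one side, all of which contain many overlapping $P_4$-copies.

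The main obstacle is the combinatorial search itself: na\"ively there are $2^{128}$ subfamilies per host. The essential reductions are (i) quotienting by $\mathrm{Aut}(H)$, (ii) restricting to up-closed $\mathcal{C}$ with only a handful of minimal generators, and (iii) observing that each minimal generator must itself contain a $P_4$, which collapses the generator space dramatically. Once the pair $(H, \mathcal{C})$ with $|\mathcal{C}| \geq 17$ is exhibited, the density inequality $17/128 > 1/8$ and the pairwise $P_4$-intersection check are both mechanical, and the conjecture then fails for every $n \geq |V(H)|$.
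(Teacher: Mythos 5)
Your approach is exactly the paper's (and Christofides's): lift a dense pairwise-$P_4$-intersecting family $\mathcal{C}$ from a connected $6$-vertex, $7$-edge host to all of $K_n$, aiming for $|\mathcal{C}| = 17$ so that $17/128 > 1/8$; indeed, your candidate ``$K_{2,3}$ with an extra vertex adjacent to one side'' is precisely the Christofides host graph of Figure~\ref{fig:christofides}. The only thing separating your plan from a complete disproof is actually exhibiting the $17$ pairwise-$P_4$-intersecting subgraphs rather than describing the search that would find them --- a mechanical, finite step that the paper itself defers to Christofides's preprint --- so the argument is the same in substance and merely stops one verification short.
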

In 2012, Christofides \cite{christofides} disproved this conjecture, using the graph shown in Figure \ref{fig:christofides}.

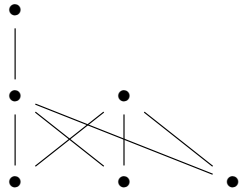
\begin{figure}[h]
    \centering
    \begin{tikzcd}
        \bullet \\
        \bullet & \bullet \\
        \bullet & \bullet & \bullet
        \arrow[no head, from=1-1, to=2-1]
        \arrow[no head, from=2-1, to=3-1]
        \arrow[no head, from=3-1, to=2-2]
        \arrow[no head, from=3-2, to=2-1]
        \arrow[no head, from=3-2, to=2-2]
        \arrow[no head, from=3-3, to=2-1]
        \arrow[no head, from=3-3, to=2-2]
    \end{tikzcd}
    \caption{The 7-edge host graph on 6 vertices used in the Christofides construction \cite{christofides}.}
    \label{fig:christofides}
\end{figure}

Christofides identified a collection $\mathcal{G}$ of 17 subgraphs of this host graph that were pairwise $P_4$-intersecting. By taking all supergraphs in $K_n$ ($n \geq 6$) for each graph in $\mathcal{G}$, he constructed a family of size $17 \cdot 2^{\binom{n}{2} - 7} = \frac{17}{128} \cdot 2^{\binom{n}{2}}$. Since $\frac{17}{128} > \frac{1}{8} = 2^{-3}$, this disproved the conjecture.

\begin{theorem}[Christofides]
For $n\ge 6$, there exists a $P_4$-intersecting family $\mathcal{F}$ with $|\mathcal{F}| \ge \frac{17}{128} \cdot 2^{\binom n2}$.
\end{theorem}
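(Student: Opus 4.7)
The plan is to lift an explicit collection of $17$ pairwise $P_4$-intersecting subgraphs of the $6$-vertex host $H_0$ of Figure~\ref{fig:christofides} to all of $K_n$ by taking supergraphs.

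\textbf{Construction.} Fix a labeled embedding of $H_0$ into $K_n$ using $6$ of the $n$ vertices, and let $E_0 \subseteq E(K_n)$ denote its edge set, so $|E_0| = 7$. Let $\mathcal{G} = \{G_1, \dots, G_{17}\}$ be a collection of subgraphs of $H_0$ chosen so that $E(G_i) \cap E(G_j)$ contains a copy of $P_4$ for all (not necessarily distinct) $i,j \in \{1,\dots,17\}$. For each $i$ define
\[ \mathcal{F}_i = \{F \subseteq K_n : F \cap E_0 = E(G_i)\}. \]
Since the $\binom{n}{2}-7$ edges of $K_n$ outside $E_0$ may be chosen freely, $|\mathcal{F}_i| = 2^{\binom{n}{2}-7}$. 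Set $\mathcal{F} = \bigsqcup_{i=1}^{17} \mathcal{F}_i$.

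\textbf{Size and intersection property.} The families $\mathcal{F}_i$ are pairwise disjoint because distinct $G_i$ produce distinct restrictions to $E_0$, so
\[ |\mathcal{F}| = 17 \cdot 2^{\binom{n}{2}-7} = \tfrac{17}{128}\cdot 2^{\binom n2}. \]
For any $F \in \mathcal{F}_i$ and $F' \in \mathcal{F}_j$ we have $F \cap F' \supseteq E(G_i) \cap E(G_j)$, which contains a $P_4$ by construction of $\mathcal{G}$ (the diagonal case $i=j$ in that condition guarantees that each $G_i$ itself contains a $P_4$). Hence $\mathcal{F}$ is $P_4$-intersecting.

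\textbf{Main obstacle.} Everything hinges on exhibiting the collection $\mathcal{G}$. The host $H_0$ has only $2^7 = 128$ subgraphs, each candidate must itself contain a $P_4$, and for two candidates $G, G'$ the edge-intersection $E(G)\cap E(G')$ must be a subgraph of $H_0$ still containing a $P_4$, which forces the candidates to be reasonably dense. I would construct the auxiliary graph whose vertices are the $P_4$-containing subgraphs of $H_0$ and whose edges record pairs with $P_4$-containing intersection, and search for a maximum clique of size $17$. The automorphism group $S_3$ of $H_0$ (permuting the three vertices of degree $2$ whose common neighborhood is $\{b,c\}$) partitions the candidate set into a small number of orbits, reducing the $\binom{17}{2} = 136$ pairwise verifications to a manageable case analysis; alternatively, the check is routine by computer.
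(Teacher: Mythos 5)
Your reduction is the same one the paper uses: fix the $7$-edge host graph of Figure~\ref{fig:christofides} inside $K_n$, take a $17$-element pairwise $P_4$-intersecting family of its subgraphs, and lift by freely choosing the $\binom{n}{2}-7$ outside edges. Your counting is correct, and your variant of fixing the exact restriction $F\cap E_0 = E(G_i)$ (rather than taking all supergraphs of each $G_i$) is a clean way to get disjointness without needing $\mathcal{G}$ to be an up-set; you also correctly note that the diagonal case $i=j$ is needed so that two members of the same $\mathcal{F}_i$ still intersect in a $P_4$.

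The one thing you do not supply is the $17$-element family $\mathcal{G}$ itself, and that is the entire substance of the theorem --- without it you have only shown that \emph{if} such a family exists then the bound follows. You are candid that this is the obstacle and your proposed search (auxiliary graph on $P_4$-containing subgraphs, maximum clique, symmetry reduction via the automorphism group of the host) would indeed find it, but as written the proof is incomplete. For what it is worth, the paper does not reproduce the $17$ subgraphs either; it obtains them by citing Christofides \cite{christofides}. So your argument is correct modulo either carrying out that search explicitly or invoking the same citation.
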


This method of generating a large intersecting family on $K_n$ from a dense intersecting subfamily of a small host graph is a useful technique. If one can find a graph $G$ with $m$ edges and an $H$-intersecting family $\mathcal{G}$ of its subgraphs with density $c = |\mathcal{G}|/2^m > 2^{-e(H)}$, then for sufficiently large $n$, there is an $H$-intersecting family on $K_n$ with size at least $c \cdot 2^{\binom n2}$.

\section{A Construction via Multipartite Graphs}

We present a construction that improves upon the trivial bound for intersecting families where $H$ is a complete multipartite graph. An alternative construction for non-trivial $H$-intersecting families, where $H = K_{s_1, \dots, s_{k-1}, t}$, was recently proved in \cite{balogh-linz} by Balogh and Linz. We do a detailed comparison between the two results in Section \ref{sec:Linz-Balogh}.

Here is our result:

\begin{theorem} \label{thm:main_construction}
Let $k \geq 2$ be an integer, and let $s_1, \dots, s_{k-1}$ be positive integers. Consider an integer $t \geq 2^{s_1 + \cdots + s_{k-1}}$, and let $H = K_{s_1, \dots, s_{k-1}, t}$ be a complete $k$-partite graph with parts of sizes $s_1, \dots, s_{k-1}$, and $t$. Denote by $N = |E(H)|$ the number of edges in $H$. Then, for any integer $n \geq s_1 + \cdots + s_{k-1} + t + 2$, there exists an $H$-intersecting family of subgraphs with size at least
\[
\frac{(t + 2)(2^{s_1 + \cdots + s_{k-1}} - 1) + 1}{2^{|E(K_{s_1, \dots, s_{k-1}, t+2})|}} 2^{\binom{n}{2}} > \frac{1}{2^N} 2^{\binom{n}{2}}.
\]
\end{theorem}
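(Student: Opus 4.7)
The plan is to build a dense $H$-intersecting family $\mathcal{G}$ inside the slightly enlarged multipartite graph $G := K_{s_1, \dots, s_{k-1}, t+2}$ and then apply the supergraph-extension technique described after Christofides' theorem to pass from $G$ to $K_n$. Label the parts of $G$ as $A_1, \dots, A_{k-1}, B$ with $|A_i| = s_i$ and $|B| = t+2$, set $A := A_1 \cup \cdots \cup A_{k-1}$, and write $s := s_1 + \cdots + s_{k-1}$. For a subgraph $F \subseteq G$ and a vertex $v \in B$, call $v$ \emph{complete in $F$} if $v$ is joined in $F$ to every vertex of $A$. The key observation is that $F$ contains a copy of $H$ iff at least $t$ vertices of $B$ are complete in $F$; moreover, since an edge lies in $F \cap F'$ iff it lies in both $F$ and $F'$, a vertex $v \in B$ is complete in $F \cap F'$ precisely when it is complete in both $F$ and $F'$.

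Let $\mathcal{G}$ be the set of all $F \subseteq G$ in which \emph{at most one} vertex of $B$ fails to be complete. Counting splits into the unique graph with no defective vertex (namely $G$ itself) and those specified by a choice of defective vertex $v \in B$ together with a non-empty subset $S \subseteq A$ that $v$ is disconnected from, yielding $|\mathcal{G}| = 1 + (t+2)(2^s - 1)$. For any $F, F' \in \mathcal{G}$, the union of their defective-vertex sets has size at most $2$, so at least $(t+2) - 2 = t$ vertices of $B$ are complete in both, hence in $F \cap F'$; therefore $F \cap F'$ contains the required copy of $H$, and $\mathcal{G}$ is $H$-intersecting.

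Next I fix an embedding of $G$ into $K_n$ (possible because $n \ge s + t + 2 = |V(G)|$) and, for each $F \in \mathcal{G}$ and each $E \subseteq E(K_n) \setminus E(G)$, take the supergraph $F \cup E \subseteq K_n$. These are pairwise distinct, since $F$ is recovered as the restriction of $F \cup E$ to $E(G)$; and pairwise $H$-intersecting, since $(F \cup E) \cap (F' \cup E') \supseteq F \cap F'$ already contains $H$. This produces a family on $K_n$ of size $|\mathcal{G}| \cdot 2^{\binom{n}{2} - |E(G)|}$, which is exactly the first expression in the theorem. For the trailing strict inequality, $|E(G)| - N$ counts only the edges from the two extra vertices of $B$ to $A$, giving $2s$, so the claim reduces to $(t+2)(2^s - 1) + 1 > 2^{2s} = (2^s - 1)(2^s + 1) + 1$, i.e.\ $t > 2^s - 1$, which is the hypothesis $t \ge 2^s$.

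The heart of the argument, and really the only decision that has to be justified, is inflating $B$ by exactly two vertices. Inflating by one forces every member of $\mathcal{G}$ to share a common defective vertex and produces no density gain, while each extra inflation vertex multiplies the denominator $2^{|E(G)|}$ by $2^s$, eating any improvement from more flexible defect patterns. Once the ``one defect with two slack'' accounting is in place, the remaining verifications are routine counting and elementary bookkeeping about edge intersections, and I do not foresee any real obstacle.
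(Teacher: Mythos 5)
Your construction is the same as the paper's: the host graph $K_{s_1,\dots,s_{k-1},t+2}$, the family consisting of $G$ together with the proper supergraphs of the $t+2$ vertex-deleted graphs $G-v$ ($v\in B$), the count $(t+2)(2^s-1)+1$, and the final inequality (your factorization $2^{2s}=(2^s-1)(2^s+1)+1$ is a slightly slicker way to finish than the paper's). One imprecision is worth flagging, though, because as literally written your family is wrong when $k\ge 3$: "all $F\subseteq G$ in which at most one vertex of $B$ fails to be complete" places no condition on the edges \emph{inside} $A$, i.e.\ between distinct parts $A_i,A_j$. For example, with $H=K_{1,1,4}$ and $G=K_{1,1,6}$, the graph $G$ minus the single $A_1$--$A_2$ edge has every vertex of $B$ complete, yet it is bipartite and contains no copy of $H$; so your "key observation" ($F\supseteq H$ iff at least $t$ vertices of $B$ are complete) is false, and the literal $\mathcal G$ is neither of the claimed size nor $H$-intersecting. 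Your counting and your intersection argument clearly presuppose the intended family — every edge of $G$ present except possibly some edges from one defective vertex of $B$ to $A$ — so the fix is just to add "and $F$ contains all edges of $G$ not incident to the defective vertex" (equivalently, $F\supseteq G-v$ for some $v\in B$, or $F=G$) to the definition. With that emendation the argument is correct and coincides with the paper's proof.
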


\begin{proof}
Let the host graph be $G = K_{s_1, \dots, s_{k-1}, t+2}$. The vertex set of $G$ is partitioned into sets $V_1, \dots, V_{k-1}, W$ with $|V_i|=s_i$ and $|W|=t+2$. The graph $H$ is an induced subgraph of $G$. Consider the $t+2$ subgraphs of $G$ of the form $G_w = G[V(G)\setminus\{w\}]$ for each $w \in W$. Each $G_w$ is isomorphic to $K_{s_1, \dots, s_{k-1}, t+1}$. The intersection of any two distinct subgraphs, $G_w \cap G_{w'}$, is isomorphic to $K_{s_1, \dots, s_{k-1}, t}$, which is precisely $H$.

We construct an $H$-intersecting family $\mathcal{G}$ of subgraphs of $G$ as follows. Let $m=s_1+\dots + s_{k-1}$. For each of the $t+2$ subgraphs $G_w$, we take all supergraphs of $G_w$ within $G$ except for $G$ itself. This gives $2^{e(G)-e(G_w)}-1 = 2^m-1$ distinct subgraphs for each $w$. Adding the complete graph $G$ to this collection gives a family of size $|\mathcal{G}| = (t+2)(2^m - 1) + 1$. Since the intersection of any two chosen subgraphs must contain some $G_w \cap G_{w'}$ or $G_w$ itself, this family $\mathcal{G}$ is $H$-intersecting.

To obtain the family $\mathcal{F}$ on $K_n$, we take all supergraphs of the members of $\mathcal{G}$. The measure of this family is $|\mathcal{G}| \cdot 2^{-e(G)}$. We must show this improves the trivial bound:
\[ |\mathcal{G}| \cdot 2^{-e(G)} > 2^{-e(H)} \iff |\mathcal{G}| > 2^{e(G)-e(H)} \]
The difference in edges is $e(G) - e(H) = m(t+2) - mt = 2m$. The inequality to verify is $(t+2)(2^m-1)+1 > 2^{2m}$.
Given the condition $t \ge 2^m$, we have:
\[ (t+2)(2^m-1)+1 \ge (2^m+2)(2^m-1)+1 = (2^{2m} + 2^m - 2) + 1 = 2^{2m} + 2^m - 1 \]
Since $m \ge 1$, this is strictly greater than $2^{2m}$, which completes the proof.
\end{proof}

In particular, if $k = 2$, this gives an improvement on the trivial bound for certain bipartite graphs.

\begin{cor}
If $s\in \mathbb N_{>0}$, $t\ge 2^s$, and $n\ge s+t+2$, there exists a $K_{s,t}$-intersecting family of size at least 
\[\frac{(t+2)(2^s-1)+1}{2^{s(t+2)}}2^{\binom n2} > \frac{1}{2^{st}}2^{\binom n2}.\]
\end{cor}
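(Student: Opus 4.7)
The corollary is the $k=2$ specialization of Theorem \ref{thm:main_construction}, so my plan is simply to set $k=2$ and $s_1 = s$ in that theorem and check that every piece of the statement translates correctly.

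First I would note that with $k=2$, the sum $s_1+\cdots+s_{k-1}$ collapses to the single term $s$. Consequently the hypothesis $t \geq 2^{s_1+\cdots+s_{k-1}}$ becomes $t \geq 2^s$, and the vertex-count hypothesis $n \geq s_1+\cdots+s_{k-1}+t+2$ becomes $n \geq s+t+2$, exactly matching the corollary's assumptions. The complete multipartite graphs involved reduce to complete bipartite graphs: $H = K_{s,t}$ has $N = st$ edges, and the host graph $K_{s_1,\dots,s_{k-1},t+2}$ becomes $K_{s,t+2}$, which has $s(t+2)$ edges. Substituting these values into the conclusion of Theorem \ref{thm:main_construction} produces precisely the inequality
\[\frac{(t+2)(2^s-1)+1}{2^{s(t+2)}}2^{\binom n2} > \frac{1}{2^{st}}2^{\binom n2}.\]

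Because Theorem \ref{thm:main_construction} already carries out all of the combinatorial work — constructing the family via the supergraphs of the $G_w = G[V(G)\setminus\{w\}]$ and verifying the inequality $(t+2)(2^m-1)+1 > 2^{2m}$ — no additional argument is required. There is no real obstacle here beyond careful bookkeeping of the parameters; the content of the corollary is really just to emphasize that even in the purely bipartite setting $H = K_{s,t}$, the trivial bound $2^{-st}\cdot 2^{\binom{n}{2}}$ can be beaten whenever $t \geq 2^s$.
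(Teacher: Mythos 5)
Your proof is correct and matches the paper's treatment: the corollary is stated there as an immediate specialization of Theorem \ref{thm:main_construction} with $k=2$ and $s_1=s$, and your parameter bookkeeping ($N=st$, $e(K_{s,t+2})=s(t+2)$, $m=s$) is exactly what is needed. No gaps.
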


The proof technique can be generalized as follows.

\begin{prop}
Let $H$ be a graph. Suppose there exists a host graph $G$ and a set of $N$ distinct subgraphs $\{H_1, \dots, H_N\}$ of $G$ that satisfy the following conditions:
\begin{enumerate}
    \item Intersection Property: For any $i, j \in \{1, \dots, N\}$, the intersection $H_i \cap H_j$ contains a subgraph isomorphic to $H$.
    \item Disjoint Complement Property: For any $i \neq j$, the union $E(H_i) \cup E(H_j) = E(G)$.
\end{enumerate}
Then for any integer $n \ge |V(G)|$, there exists an $H$-intersecting family $\mathcal{F}$ of subgraphs of $K_n$ of size
\[ \left(1 + \sum_{i=1}^N \left(2^{|E(G)|-|E(H_i)|}-1\right)\right) \cdot 2^{\binom{n}{2}-|E(G)|}. \]
\end{prop}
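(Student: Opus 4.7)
The plan is to generalize the construction used in Theorem \ref{thm:main_construction} by replacing the specific role played by the subgraphs $G_w$ with the abstract subgraphs $H_1, \dots, H_N$. First I would build an $H$-intersecting family $\mathcal{G}$ of subgraphs of $G$ by the following recipe: for each index $i \in \{1, \dots, N\}$, include every subgraph $F$ with $H_i \subseteq F \subsetneq G$ (there are $2^{|E(G)|-|E(H_i)|} - 1$ such $F$, obtained by adjoining any proper subset of $E(G)\setminus E(H_i)$ to $H_i$), and separately include $G$ itself. Summing yields the claimed count, provided these contributions are genuinely disjoint.

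The main obstacle is this disjointness check, and it is precisely what the Disjoint Complement Property is designed to furnish. If a single $F \subsetneq G$ belonged to both the $i$-th and $j$-th collections with $i \neq j$, then $E(F) \supseteq E(H_i) \cup E(H_j) = E(G)$ by condition (2), forcing $F = G$, a contradiction. The graph $G$ itself is counted only once because it is explicitly excluded from each proper-supergraph collection. Therefore $|\mathcal{G}| = 1 + \sum_{i=1}^N (2^{|E(G)|-|E(H_i)|} - 1)$, matching the factor in the stated bound.

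Next I would verify that $\mathcal{G}$ is $H$-intersecting. If one of two members is $G$ itself, their intersection is the other member, which is a supergraph of some $H_i$ and hence contains a copy of $H$ by applying the Intersection Property with $i = j$. If both members are proper, say $F_1 \supseteq H_i$ and $F_2 \supseteq H_j$, then $F_1 \cap F_2 \supseteq H_i \cap H_j$, which contains a copy of $H$ by condition (1).

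Finally, to pass from subgraphs of $G$ to subgraphs of $K_n$, I would fix an embedding of $G$ into $K_n$ and take $\mathcal{F} = \{F \subseteq K_n : F \cap E(G) \in \mathcal{G}\}$. Each element of $\mathcal{G}$ admits $2^{\binom{n}{2}-|E(G)|}$ independent extensions on the remaining edges of $K_n$, and different elements of $\mathcal{G}$ yield disjoint extension sets, so $|\mathcal{F}| = |\mathcal{G}| \cdot 2^{\binom{n}{2}-|E(G)|}$, the quantity asserted. For any $F_1, F_2 \in \mathcal{F}$ we have $F_1 \cap F_2 \supseteq (F_1 \cap E(G)) \cap (F_2 \cap E(G))$, and this already contains $H$ by the previous paragraph, so $\mathcal{F}$ is $H$-intersecting. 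No delicate case analysis beyond the disjointness step is required.
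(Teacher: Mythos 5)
Your proposal is correct and follows essentially the same route as the paper's proof: the same family $\mathcal{G}$ (all proper supergraphs of each $H_i$ within $G$, plus $G$ itself), the same use of the Disjoint Complement Property to establish that the $N$ collections are pairwise disjoint, and the same lift to $K_n$ by free extension over the $\binom{n}{2}-|E(G)|$ remaining edges. Your explicit description $\mathcal{F}=\{F\subseteq K_n : F\cap E(G)\in\mathcal{G}\}$ is a slightly cleaner packaging of the paper's ``all supergraphs'' step, but the argument is the same.
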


\begin{proof}
    Let $\mathcal{G}$ be a family of subgraphs of $G$ defined as follows:
\[ \mathcal{G} = \{G\} \cup \bigcup_{i=1}^{N} \{ F \mid E(H_i) \subseteq E(F) \text{ and } F \neq G \}. \]
This family consists of the graph $G$ itself, along with every proper supergraph of each $H_i$.

Now, we show that the family $\mathcal{G}$ is $H$-intersecting. Let $F_1, F_2$ be two distinct members of $\mathcal{G}$. We must show that $F_1 \cap F_2$ contains a subgraph isomorphic to $H$. Any element of $\mathcal G$ contains some $H_i$ as a subgraph, so let $F_1$ have $H_{i_1}$ as a subgraph and $F_2$ have $H_{i_2}$ as a subgraph. Then $F_1\cap F_2$ contains $H_{i_1}\cap H_{i_2}$, which we know contains a subgraph isomorphic to $H$. Thus, $\mathcal{G}$ is an $H$-intersecting family.

Next, we show that the size of $\mathcal{G}$ is $1 + \sum_{i=1}^{N} \left(2^{|E(G)|-|E(H_i)|}-1\right)$. Let $\mathcal{S}_i = \{ F \mid E(H_i) \subseteq E(F) \text{ and } F \neq G \}$. Then $\mathcal{G} = \{G\} \cup \bigcup_{i=1}^N \mathcal{S}_i$. To calculate $|\mathcal{G}|$, we show that the sets $\mathcal{S}_i$ are pairwise disjoint for $i \neq j$. Assume for contradiction that there exists a graph $F \in \mathcal{S}_i \cap \mathcal{S}_j$ for $i \neq j$. By definition of $\mathcal{S}_i$ and $\mathcal{S}_j$, it must be that $E(H_i) \subseteq E(F)$ and $E(H_j) \subseteq E(F)$. This implies $E(H_i) \cup E(H_j) \subseteq E(F)$. From Property 2, we know $E(H_i) \cup E(H_j) = E(G)$. Therefore, $E(G) \subseteq E(F)$. Since $F$ is a subgraph of $G$, we must have $E(F) = E(G)$, which implies $F=G$. This contradicts the fact that the sets $\mathcal{S}_i$ and $\mathcal{S}_j$ explicitly exclude $G$. The sets are thus pairwise disjoint. The size of $\mathcal{G}$ is \[ |\mathcal{G}| = |\{G\}| + \sum_{i=1}^{N} |\mathcal{S}_i| = 1 + \sum_{i=1}^{N} \left(2^{|E(G)| - |E(H_i)|} - 1\right). \] This confirms the size calculation.

Finally, we construct the family $\mathcal{F}$ on the $n$ vertices of $K_n$. We embed $G$ in $K_n$. The family $\mathcal{F}$ is the set of all supergraphs in $K_n$ of the members of $\mathcal{G}$. Each graph $F_G \in \mathcal{G}$ can be extended to a supergraph in $K_n$ by adding any subset of the $\binom{n}{2} - |E(G)|$ edges that exist in $K_n$ but not in $G$. Since all members of $\mathcal{G}$ are distinct, their corresponding sets of supergraphs in $K_n$ are also disjoint.

The total size of $\mathcal{F}$ is therefore $|\mathcal{G}| \cdot 2^{\binom{n}{2} - |E(G)|}$. Substituting the expression for $|\mathcal{G}|$ yields the stated result.

\end{proof}

\section{Comparison to the Balogh-Linz Bound}\label{sec:Linz-Balogh}

In this section, we compare our construction (from Theorem \ref{thm:main_construction}) with an alternative method for generating non-trivial $H$-intersecting families for complete multipartite graphs. This alternative was recently provided by Balogh and Linz \cite{balogh-linz}. 

Let $H = K_{s_1, \dots, s_r, t}$ be the target graph and let $S = \sum_{i=1}^{r} s_i$. The trivial bound for an $H$-intersecting family is $2^{\binom{n}{2} - |E(H)|}$. The Balogh-Linz result is stated as follows.

\begin{theorem}[Balogh-Linz Bound, \cite{balogh-linz}] \label{thm:balogh-linz}
Let $s_1,s_2,\dots,s_r,t$ be integers with $s_i \ge 1$ for $1\le i \le r$ and $t>2^{2\sum_i s_i} - 2\sum_i s_i - 1$. Then there exists a $K_{s_1,\dots,s_r,t}$-intersecting family of graphs $\mathcal H$ with
\[|\mathcal H| > 2^{\binom n2 - \sum_{1\le i < j \le r}s_is_j - \sum_{i=1}^r s_it}.\]
\end{theorem}

The construction in \cite{balogh-linz} establishes a family whose size is $(t+2S+1)^S \cdot 2^{\binom{n}{2} - |E(H)| - 2S^2}$. The improvement factor over the trivial bound, which we denote $f_{BL}(t)$, is therefore:
\[
    f_{BL}(t) = \frac{(t+2S+1)^S}{2^{2S^2}}.
\]
Note that $f_{BL}(t)$ is a polynomial in $t$ of degree $S$.

In contrast, our construction from Theorem \ref{thm:main_construction} provides a family of size $((t+2)(2^S-1)+1) \cdot 2^{\binom{n}{2} - |E(H)| - 2S}$. This yields a linear improvement factor, which we denote $f_{\text{new}}(t)$:
\[
    f_{\text{new}}(t) = \frac{(t+2)(2^S-1)+1}{2^{2S}}.
\]
This factor $f_{\text{new}}(t)$ is linear in $t$.

This distinction in the degree of $t$ implies that the Balogh-Linz construction is asymptotically stronger for sufficiently large $t$. Simultaneously, an analysis of the respective conditions on $t$ reveals that our construction is valid across a wider range of parameters.

For example, the Balogh-Linz construction requires $t > 2^{2S} - 2S - 1$. Our construction requires only $t \ge 2^S$. This creates a substantial interval $t \in [2^S, 2^{2S} - 2S - 1]$ where our construction is the only one applicable.

For $t \ge 2^{2S} - 2S$, both constructions are valid. We can compute the crossover point, $t_{\text{cross}}$, where $f_{BL}(t)$ becomes larger than $f_{\text{new}}(t)$.

Table \ref{tab:comparison} provides a comprehensive numerical analysis for small values of $S$. All crossover points are computed by solving $f_{BL}(t) > f_{\text{new}}(t)$ for the smallest integer $t$.

\begin{table}[h]
\centering
\caption{Comparison of applicability and performance of $f_{\text{new}}(t)$ and $f_{BL}(t)$.}
\label{tab:comparison}
\begin{tabular}{@{}lcccc@{}}
\toprule
 & $f_{\text{new}}(t)$ Validity & $f_{BL}(t)$ Validity & Validity Gap & Crossover $t_{\text{cross}}$ \\
$S$ & (Our Bound) & (Balogh-Linz Bound) & (Our Bound Only) & (where $f_{BL} > f_{\text{new}}$) \\
\midrule
2 & $t \ge 4$ & $t \ge 12$ & $[4, 11]$ & $t = 41$ \\
3 & $t \ge 8$ & $t \ge 58$ & $[8, 57]$ & $t = 160$ \\
4 & $t \ge 16$ & $t \ge 248$ & $[16, 247]$ & $t = 621$ \\
5 & $t \ge 32$ & $t \ge 1014$ & $[32, 1013]$ & $t = 2404$ \\
\bottomrule
\end{tabular}
\end{table}

The data in Table \ref{tab:comparison} leads to two conclusions:
\begin{itemize}
    \item The validity gap, where our construction is the only one known to improve the trivial bound, grows doubly exponentially with $S$. For $S=5$, our bound applies to a range of 982 values of $t$ for which the Balogh-Linz construction is not valid.
    
    \item Even after the Balogh-Linz construction becomes valid, our linear bound remains numerically superior for a significant subsequent interval. For example, at the first point of Balogh-Linz validity for $S=5$ (i.e., $t=1014$), our improvement factor is $f_{\text{new}}(1014) \approx 30.76$, whereas the Balogh-Linz factor is $f_{BL}(1014) = (1025/1024)^5 \approx 1.005$.
\end{itemize}

We can estimate the upper point of this range by equating the two improvement factors:
\[\frac{t}{2^S} \approx f_{\text{new}}(t) \approx f_{BL}(t) \approx \frac{t^S}{2^{2S^2}}.\]
Rearranging gives $t^{S-1} \approx 2^{2S^2 - S}$, or $t\approx 2^{2S + 1 + \frac{1}{S-1}}$. This approximation aligns closely with the actual crossover points; for $S = 5$, it yields $t\approx 2^{11 + \frac 14} \approx 2435$, which is close to the actual value of 2404. Thus, our construction typically improves on the Balogh-Linz construction for $t$ up to approximately $2^{2S+1}$.

In summary, while the Balogh-Linz construction is asymptotically more powerful, Theorem \ref{thm:main_construction} provides a bound that is effective for a wider range of parameters. It exclusively improves the trivial bound in the interval $t \in [2^S, 2^{2S} - 2S - 1]$ and remains numerically stronger for a large interval thereafter.

\section{Applications and New Bounds}

Theorem \ref{thm:main_construction} provides a rich source of new bounds for intersecting families where the target graph is not a star-forest. We select the smallest integer $t$ satisfying the theorem's condition, $t = 2^s$, to maximize the improvement over the trivial bound.

\subsection{Complete Bipartite Graphs \texorpdfstring{($H = K_{s,t}$)}{($H = Kst$)}}

The case $k=2$ yields bounds for complete bipartite graphs. Here $s_1 = s$, so we set $S = s$. We select the smallest integer $t$ satisfying the theorem's condition, $t = 2^S = 2^s$.

\begin{cor}
The following improved lower bounds for $K_{s, 2^s}$-intersecting families hold:
\begin{enumerate}
    \item For $H = K_{2,4}$ ($n \ge 8$): $|\mathcal{F}| \ge \frac{19}{2^{12}} \cdot 2^{\binom{n}{2}}$ (vs. trivial $\frac{16}{2^{12}}$).
    \item For $H = K_{3,8}$ ($n \ge 13$): $|\mathcal{F}| \ge \frac{71}{2^{30}} \cdot 2^{\binom{n}{2}}$ (vs. trivial $\frac{64}{2^{30}}$).
    \item For $H = K_{4,16}$ ($n \ge 22$): $|\mathcal{F}| \ge \frac{271}{2^{72}} \cdot 2^{\binom{n}{2}}$ (vs. trivial $\frac{256}{2^{72}}$).
    \item For $H = K_{5,32}$ ($n \ge 39$): $|\mathcal{F}| \ge \frac{1055}{2^{170}} \cdot 2^{\binom{n}{2}}$ (vs. trivial $\frac{1024}{2^{170}}$).
\end{enumerate}
\end{cor}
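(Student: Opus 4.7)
The plan is to apply Theorem \ref{thm:main_construction} directly with $k = 2$, $s_1 = s$, and the specific choice $t = 2^s$. First I would verify the hypothesis: for $k=2$, the theorem requires $t \ge 2^{s_1 + \cdots + s_{k-1}} = 2^s$, which holds with equality for our choice. The host graph is then $G = K_{s,\, 2^s + 2}$ with $|E(G)| = s(2^s + 2)$, and the vertex requirement $n \ge s + t + 2 = s + 2^s + 2$ yields exactly the stated bounds $n \ge 8, 13, 22, 39$ for $s = 2, 3, 4, 5$.

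Next I would substitute into the formula provided by the theorem, giving a family of size at least
\[
\frac{(2^s + 2)(2^s - 1) + 1}{2^{s(2^s + 2)}} \, 2^{\binom{n}{2}}.
\]
The numerator simplifies to $2^{2s} + 2^s - 1$, and for each $s \in \{2, 3, 4, 5\}$, direct arithmetic yields the values $19, 71, 271, 1055$. The denominator exponents $s(2^s + 2)$ evaluate to $12, 30, 72, 170$, matching each case.

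For the comparison with the trivial bound, I would note that $e(K_{s, 2^s}) = s \cdot 2^s$, so the trivial bound rewritten over the same denominator is $\frac{2^{s(2^s + 2) - s \cdot 2^s}}{2^{s(2^s + 2)}} = \frac{2^{2s}}{2^{s(2^s + 2)}}$, producing the numerators $16, 64, 256, 1024$ in the four cases. Since $2^{2s} + 2^s - 1 > 2^{2s}$ for every $s \ge 1$, the strict improvement over the trivial bound follows, as already established in the proof of Theorem \ref{thm:main_construction}.

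There is no genuine obstacle here; the corollary is a direct specialization of Theorem \ref{thm:main_construction}, and the work is entirely arithmetic. The only thing worth being careful about is bookkeeping of exponents when normalizing numerator and denominator to compare against the trivial lower bound.
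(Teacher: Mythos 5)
Your proposal is correct and matches the paper's proof exactly: both specialize Theorem \ref{thm:main_construction} to $k=2$, $t=2^s$, use the host $G=K_{s,2^s+2}$, and verify the numerators $(2^s+2)(2^s-1)+1 = 2^{2s}+2^s-1$ against the trivial $2^{2s}$. All the arithmetic checks out.
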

\begin{proof}
We prove (4) as an example. Let $s=5$, so $t=2^5=32$. The target is $H=K_{5,32}$ with $|E(H)|=160$. The host graph is $G=K_{5,34}$ with $|E(G)|=170$. The density factor from our construction is
\[ \frac{(t+2)(2^s-1)+1}{2^{|E(G)|}} = \frac{(32+2)(2^5-1)+1}{2^{170}} = \frac{1055}{2^{170}}. \]
The trivial bound gives a density of $\frac{1}{2^{160}} = \frac{1024}{2^{170}}$. Since $1055 > 1024$, our bound is better. The other proofs are analogous.
\end{proof}

\subsection{General Multipartite Graphs}

The construction is equally effective for multipartite graphs with more than two parts.

\begin{cor}
The following improved lower bounds for multipartite-intersecting families hold:
\begin{enumerate}
    \item For $H = K_{1,1,4}$ ($n \ge 8$): $|\mathcal{F}| \ge \frac{19}{2^{13}} \cdot 2^{\binom{n}{2}}$ (vs. trivial $\frac{16}{2^{13}}$).
    \item For $H = K_{1,2,8}$ ($n \ge 13$): $|\mathcal{F}| \ge \frac{71}{2^{32}} \cdot 2^{\binom{n}{2}}$ (vs. trivial $\frac{64}{2^{32}}$).
    \item For $H = K_{2,2,16}$ ($n \ge 22$): $|\mathcal{F}| \ge \frac{271}{2^{76}} \cdot 2^{\binom{n}{2}}$ (vs. trivial $\frac{256}{2^{76}}$).
    \item For $H = K_{1,1,1,8}$ ($n \ge 13$): $|\mathcal{F}| \ge \frac{71}{2^{33}} \cdot 2^{\binom{n}{2}}$ (vs. trivial $\frac{64}{2^{33}}$).
\end{enumerate}
\end{cor}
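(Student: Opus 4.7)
The plan is to verify each of the four cases by a direct application of Theorem~\ref{thm:main_construction}, with the parameters chosen exactly as in the corresponding bipartite corollary: given $H = K_{s_1,\dots,s_{k-1},t}$, read off the $s_i$'s, set $m = s_1 + \cdots + s_{k-1}$, and observe that in every case $t = 2^m$, so the hypothesis $t \ge 2^m$ is satisfied (with equality, which is the choice that maximises the density improvement, as noted at the start of Section~3). The vertex-count requirement $n \ge s_1 + \cdots + s_{k-1} + t + 2$ then produces the claimed threshold, e.g.\ $n \ge 1+1+4+2 = 8$ for $K_{1,1,4}$ and $n \ge 1+1+1+8+2 = 13$ for $K_{1,1,1,8}$.

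Next I would compute the two edge counts that drive the comparison. For the host graph $G = K_{s_1,\dots,s_{k-1},t+2}$ and for $H$ itself,
\[
|E(G)| = \sum_{i<j} s_i s_j + (t+2)\,m, \qquad |E(H)| = \sum_{i<j} s_i s_j + t\,m,
\]
so $|E(G)| - |E(H)| = 2m$ in each case, exactly as in the proof of Theorem~\ref{thm:main_construction}. Plugging these into the theorem yields the density $\frac{(t+2)(2^m-1)+1}{2^{|E(G)|}}$, and the trivial bound expressed over the same denominator is $\frac{2^{2m}}{2^{|E(G)|}}$.

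It then suffices to do the arithmetic case by case. I would present case (4) in full as a model: $H = K_{1,1,1,8}$ has $m=3$, $t=8=2^3$, host $G = K_{1,1,1,10}$ with $|E(G)| = 3 + 3\cdot 10 = 33$, numerator $(t+2)(2^m-1)+1 = 10 \cdot 7 + 1 = 71$, giving $\frac{71}{2^{33}}$ against the trivial $2^{2m}/2^{|E(G)|} = \frac{64}{2^{33}}$. The remaining three cases are identical substitutions: $K_{1,1,4}$ with $m=2,t=4$; $K_{1,2,8}$ with $m=3,t=8$; and $K_{2,2,16}$ with $m=4,t=16$. In every instance the numerator from the theorem exceeds $2^{2m}$, confirming the improvement, exactly as the inequality $(t+2)(2^m-1)+1 > 2^{2m}$ was established in the proof of Theorem~\ref{thm:main_construction}.

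There is no real obstacle here; the proof is pure bookkeeping once the parameters are identified. The only place to be careful is in counting edges of the complete multipartite host graphs (it is easy to slip on the cross-part contributions $\sum_{i<j} s_i s_j$ when $k \ge 3$), so I would write out $|E(G)|$ explicitly for each case before plugging into the formula.
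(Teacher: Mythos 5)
Your proposal is correct and follows exactly the paper's own route: a direct substitution into Theorem~\ref{thm:main_construction}, with the paper working out case (3) in full where you work out case (4); both computations agree. One caution: if you actually carry out the "identical substitution" for case (1), you get $|E(K_{1,1,4})|=9$ and $|E(K_{1,1,6})|=13$, so the construction yields density $\frac{19}{2^{13}}$ against the true trivial density $\frac{16}{2^{13}}$ --- not the $\frac{19}{2^{14}}$ vs.\ $\frac{16}{2^{14}}$ printed in the corollary (which appears to be a typo in the statement, since $2^{-e(K_{1,1,4})}\neq\frac{16}{2^{14}}$); your method is sound, but you should not claim it "confirms" the printed numbers in that case without doing the edge count, which is precisely the slip on the cross-part terms you warned about.
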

\begin{proof}
We prove (3) as an example. Let $k=3$ with $s_1=2, s_2=2$. Then $S=4$ and we set $t=2^4=16$. The target is $H=K_{2,2,16}$ with $|E(H)| = 2\cdot2 + 2\cdot16 + 2\cdot16 = 68$. The host is $G=K_{2,2,18}$ with $|E(G)| = 2\cdot2 + 2\cdot18 + 2\cdot18 = 76$. The new density is
\[ \frac{(t+2)(2^S-1)+1}{2^{|E(G)|}} = \frac{(16+2)(2^4-1)+1}{2^{76}} = \frac{18 \cdot 15 + 1}{2^{76}} = \frac{271}{2^{76}}. \]
The trivial bound gives a density of $\frac{1}{2^{68}} = \frac{256}{2^{76}}$. 
\end{proof}

\section{On the Optimality of the \texorpdfstring{$P_4$}{P4} Bound}

While the Christofides construction improved the known bound for $P_4$-intersecting families from the trivial $\frac{1}{8}$ to $\frac{17}{128}$, its optimality remains an open question. It is believed among some experts in the field that this bound is indeed sharp, though this conjecture does not appear to be formally stated in the literature. We state it here.

\begin{conj} \label{conj:p4_optimal}
For $n \ge 6$, the maximum size of a $P_4$-intersecting family is $\frac{17}{128} \cdot 2^{\binom n2}$. Furthermore, any extremal family is isomorphic to one generated by the Christofides construction.
\end{conj}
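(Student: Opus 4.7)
The plan has three stages: reduction to a finite host-graph density problem, a spectral or Fourier-analytic upper bound matching $\frac{17}{128}$, and a stability argument for uniqueness.

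First I would replace $\mathcal{F}$ by its upward closure, which preserves the $P_4$-intersecting property and can only increase the density, so we may assume $\mathcal{F}$ is monotone. Next, following the paradigm of Ellis--Filmus--Friedgut for triangle-intersecting families, I would recast the problem as maximizing the weight of an independent set in the Cayley graph on $\mathbb{F}_2^{\binom{n}{2}}$ whose edges connect pairs $(F, F')$ with $F \cap F'$ containing no copy of $P_4$. The core analytic step is to find a nonnegative weighting of these forbidden intersection patterns whose associated Hoffman bound evaluates to exactly $\frac{17}{128}$. Any such weighting should be supported on configurations that obstruct a $P_4$ inside a 6-vertex window and should be invariant under the symmetries of the Christofides host graph.

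An alternative route is a junta approximation theorem: show that any monotone $P_4$-intersecting family whose density exceeds $\frac{1}{8} + \delta$ is $o(1)$-close to one determined entirely by the edges inside some fixed 6-vertex subset. Such a theorem, analogous to the Dinur--Friedgut junta results for intersecting set systems, would reduce the problem to a finite enumeration over small host graphs --- precisely the regime of the paper's computational verification for connected 6-vertex graphs on $7$ or $8$ edges. One would then either extend the enumeration to all sufficiently small host graphs, or give a structural monotonicity argument showing that adding edges or vertices to the Christofides graph cannot increase the ratio $|\mathcal{G}|/2^{e(G)}$. For the uniqueness statement, I would use a stability version of whichever upper bound works: any family achieving $\frac{17}{128}$ must saturate the Hoffman inequality (forcing eigenvector-level structure) or concentrate on a 6-vertex junta, at which point the classification over small host graphs pins down the isomorphism type.

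The main obstacle is the sharp upper bound itself. Unlike the triangle case, where $\frac{1}{8}$ coincides with the trivial construction and the relevant spectral computation aligns with natural algebraic structure, the target $\frac{17}{128}$ arises from an exceptional 6-vertex host graph with no obvious invariance group that would make a Hoffman or hypercontractive calculation close the gap to this specific rational. Designing such an argument --- or, failing that, obtaining a rough-structure theorem strong enough to reduce to the computationally tractable regime --- is the central difficulty and the principal reason the Alon--Spencer conjecture for $P_4$ has remained open despite sustained attention.
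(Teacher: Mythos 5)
This statement is a conjecture in the paper, not a theorem: the authors do not prove it, and the only evidence they offer is a computational verification restricted to connected $6$-vertex host graphs with $7$ or $8$ edges. So there is no ``paper's proof'' to compare against, and your proposal should be judged as a research program rather than checked line-by-line.

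As a program it identifies the right landmarks --- monotonization, the Cayley-graph/Hoffman-bound framework of Ellis--Filmus--Friedgut, and a junta approximation in the style of Dinur--Friedgut --- but it contains a genuine and decisive gap that you yourself flag: neither the weighting that would make the Hoffman bound evaluate to $\frac{17}{128}$ nor the junta approximation theorem for $P_4$-intersecting families is constructed or known. Every subsequent step (the reduction to finite enumeration, the stability argument for uniqueness) is conditional on one of these two missing ingredients. There is also a secondary gap in the proposed endgame: even granting a junta theorem localizing $\mathcal{F}$ to a $6$-vertex window, you would need to enumerate \emph{all} host graphs on at most $6$ vertices (and control the closeness error $o(1)$ to convert approximate structure into the exact bound $\frac{17}{128}\cdot 2^{\binom{n}{2}}$ and exact uniqueness), whereas the paper's computation covers only connected hosts with $7$ or $8$ edges; the passage from ``$o(1)$-close to a junta'' to the sharp extremal count and the classification of extremal families is itself a nontrivial stability argument that the proposal gestures at but does not supply. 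In short, the proposal is a reasonable map of the territory, but the statement remains unproven both in the paper and in your outline.
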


This conjecture is particularly significant due to its connection to a more general conjecture proposed by Alon and Spencer \cite{AlonSpencer2016}. They conjectured that for any graph $H$ that is not a star-forest (a graph where each connected component is a star), the trivial $H$-intersecting family is not extremal.

\begin{conj}[Alon-Spencer \cite{AlonSpencer2016}]
There exists some constant $\epsilon > 0$ such that if $H$ is a fixed graph that is not a star-forest and $\mathcal F$ is an $H$-intersecting family on $K_t$, then
\[|\mathcal F| < \left(\frac 12 - \epsilon\right)2^{\binom t2}.\]
\end{conj}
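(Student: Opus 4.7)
The plan is to derive the Alon--Spencer conjecture from the combination of Conjecture~\ref{conj:p4_optimal} and the Ellis--Filmus--Friedgut theorem \cite{ellis-filmus-friedgut} for $K_3$ via a short structural reduction; essentially all of the difficulty is then absorbed into Conjecture~\ref{conj:p4_optimal}.

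First I would establish the elementary characterization that $H$ is a star-forest if and only if $H$ contains neither $P_4$ nor $K_3$ as a subgraph. The forward direction is immediate, since every star has no cycle and no path on four vertices. For the converse, each connected component $C$ of $H$ is itself $P_4$-free and triangle-free; being a connected cograph on at least two vertices, $C$ decomposes as a join $G_1 \vee G_2$, and triangle-freeness forces both joined sides to be edgeless, so $C$ is a complete bipartite graph $K_{a,b}$. Since $K_{2,2}$ contains $P_4$, one must have $\min(a,b) = 1$, and $C$ is a star. Consequently, any non-star-forest $H$ contains $P_4$ or $K_3$.

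Next I would invoke monotonicity. For any subgraph $H_0 \subseteq H$, an $H$-intersecting family is automatically $H_0$-intersecting, because any common subgraph of two members of $\mathcal{F}$ contains $H$ and hence $H_0$. If $H \supseteq P_4$, Conjecture~\ref{conj:p4_optimal} gives $|\mathcal F| \le \frac{17}{128}\cdot 2^{\binom n2}$; if $H \supseteq K_3$, the Ellis--Filmus--Friedgut theorem gives $|\mathcal F| \le \frac{1}{8}\cdot 2^{\binom n2}$. Both densities are strictly less than $\tfrac12$, so taking $\epsilon = \tfrac{47}{128} = \tfrac12 - \tfrac{17}{128}$ yields $|\mathcal F| < (\tfrac12 - \epsilon)\, 2^{\binom n2}$ uniformly over all non-star-forest $H$.

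The main obstacle is, of course, Conjecture~\ref{conj:p4_optimal} itself. The extremal construction is anchored to a specific asymmetric 6-vertex host graph, rather than a symmetric dictator or junta, so the Fourier-analytic and spectral methodology used by Ellis, Filmus, and Friedgut for $K_3$ does not transfer verbatim. A plausible two-step program would be a stability step---using hypercontractivity or spectral bounds on the Cayley-type graph whose edges record failed $P_4$-intersections---to force any near-extremal family to be essentially determined by a bounded host graph, followed by a combinatorial step extending the computational verification already begun in this paper, past connected 6-vertex hosts with 7 or 8 edges, until every candidate extremal host structure has been ruled out. The stability step is the genuinely hard analytic heart of the problem; the combinatorial step is a finite but potentially large case enumeration.
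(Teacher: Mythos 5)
Your reduction is exactly the one the paper gives: neither you nor the paper proves this statement, since it is a conjecture whose entire difficulty both of you (correctly) push onto Conjecture~\ref{conj:p4_optimal}, with the triangle case handled by Ellis--Filmus--Friedgut. One small point in your favor: you phrase the dichotomy in terms of $P_4$ or $K_3$ as \emph{subgraphs}, which is the correct statement and the one the monotonicity step actually needs, whereas the paper claims every non-star-forest contains an \emph{induced} $K_3$ or $P_4$ --- false for $K_{2,2}=C_4$, which is triangle-free and induced-$P_4$-free but not a star-forest. Your cograph/join argument for the characterization is a clean way to supply the proof the paper omits.
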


A proof of Conjecture \ref{conj:p4_optimal} would completely resolve this conjecture. The reasoning is that any graph $H$ that is not a star-forest must contain $K_3$ or $P_4$ as a subgraph.
\begin{itemize}
    \item If $H$ contains $K_3$, any $H$-intersecting family $\mathcal{F}$ is also a $K_3$-intersecting family. Thus, $|\mathcal{F}|$ is bounded by the known maximum size for $K_3$, which is $\frac{1}{8} \cdot 2^{\binom n2}$, as proved by Ellis, Filmus, and Friedgut \cite{ellis-filmus-friedgut}.
    \item If $H$ is triangle-free but contains $P_4$, any $H$-intersecting family $\mathcal{F}$ is also a $P_4$-intersecting family. If Conjecture \ref{conj:p4_optimal} holds, $|\mathcal{F}|$ would be bounded by $\frac{17}{128} \cdot 2^{\binom n2}$.
\end{itemize}
Since both bounds (the established $\frac{1}{8} = \frac{16}{128}$ and the conjectured $\frac{17}{128}$) are strictly less than $\frac{1}{2}$, a proof of Conjecture \ref{conj:p4_optimal} would confirm the Alon-Spencer conjecture.

As partial evidence supporting Conjecture \ref{conj:p4_optimal}, we performed a computational search for denser constructions on small host graphs. This search yielded no constructions outperforming the Christofides bound. We summarize our findings as follows.

\begin{theorem}
    The Christofides density of $\frac{17}{128}$ is optimal for all $P_4$-intersecting families generated from a connected host graph $G$ on 6 vertices with 7 or 8 edges.
\end{theorem}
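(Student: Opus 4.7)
The plan is a direct, finite computer search, justified by the remark at the end of Section~1 that any construction generated by a host graph $G$ corresponds to a $P_4$-intersecting subfamily $\mathcal G\subseteq 2^{E(G)}$ of density $|\mathcal G|/2^{e(G)}$. Thus the theorem is equivalent to verifying
\[
\max_{\mathcal G\ P_4\text{-intersecting}}|\mathcal G|\;\le\;17\cdot 2^{e(G)-7}
\]
for every connected host $G$ on $6$ vertices with $e(G)\in\{7,8\}$; the right-hand side equals $17$ when $e(G)=7$ and $34$ when $e(G)=8$.

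First I would enumerate host graphs. The connected 6-vertex graphs with $7$ or $8$ edges form an explicitly generable finite list (tools such as \texttt{nauty}'s \texttt{geng} produce one representative per isomorphism class immediately), and isomorphic hosts yield isomorphic search problems, so a single representative per class suffices.

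Next, for each host $G$ I would form the \emph{$P_4$-intersection graph} $\Gamma(G)$: its vertex set is the collection of subgraphs $F\subseteq G$ that themselves contain a copy of $P_4$ (every member of a $P_4$-intersecting family must contain $P_4$, since $F\cap F = F$), and its edges record the pairs $F\sim F'$ with $E(F)\cap E(F')$ spanning a $P_4$. A $P_4$-intersecting subfamily of $2^{E(G)}$ is exactly a clique in $\Gamma(G)$, so the theorem reduces to computing $\omega(\Gamma(G))$ for each host and checking the inequality above. As a sanity check, the search must return $\omega(\Gamma(G))=17$ on the specific 7-edge host of Figure~\ref{fig:christofides}, reproducing the Christofides family.

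The main obstacle is the cost of the clique computation: maximum clique is NP-hard in general and $|V(\Gamma(G))|$ can be as large as $2^8=256$. However, several features make the sweep eminently tractable: (i) restricting vertices of $\Gamma(G)$ to subgraphs containing a $P_4$ prunes the vertex set substantially; (ii) the automorphism group $\operatorname{Aut}(G)$ acts on subsets of $E(G)$ and can be exploited to quotient the search tree; and (iii) standard Bron--Kerbosch or ILP solvers handle instances of this size in well under a second each. The result of the sweep is a short table of values $\omega(\Gamma(G))$ over all connected 6-vertex hosts with $7$ or $8$ edges, every entry satisfying $\omega(\Gamma(G))\le 17\cdot 2^{e(G)-7}$; this table, together with the enumeration of hosts, constitutes the certificate of the theorem.
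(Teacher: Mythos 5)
Your proposal is essentially the paper's own argument: the paper also reduces the problem to a maximum-clique computation in an auxiliary graph whose vertices are subgraphs of each host $G$ and whose edges record pairs with a $P_4$ in their intersection, verified by an exhaustive Python search over the connected 6-vertex hosts with 7 or 8 edges. Your additional remarks (pruning to subgraphs that themselves contain a $P_4$, exploiting $\operatorname{Aut}(G)$) are sensible refinements but do not change the method.
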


\begin{proof}
    Our method involved an exhaustive search using Python (our implementation is available in Appendix~\ref{appendix:code}). For a given host graph $G$, we generated its subgraphs with at least 3 edges (the number of edges in $P_4$) and constructed an auxiliary graph whose vertices represented these subgraphs. An edge was placed between two vertices in the auxiliary graph if their corresponding subgraphs had a $P_4$ in their intersection. The size of the largest $P_4$-intersecting family supported by $G$ is then the size of the maximum clique in this auxiliary graph, a problem known to be NP-complete \cite{Karp1972}. 
\end{proof}

While our search was limited to small graphs, its results support the conjecture that the Christofides density is optimal.

\section*{Acknowledgments}
The authors would like to thank Richard Chen, Alexander Wang, Yuval Wigderson, and Cameron Wright for helpful discussions. We would also like to thank William Linz (for bringing his paper to our attention) and Varun Sivashanker (for catching a minor error) for comments on an earlier draft. Finally, we thank the organizers of the Institute for Advanced Study\slash Park City Mathematics Institute 2025 Summer School in Probabilistic and Extremal Combinatorics for the invitation to participate in the program where this research was conducted.

\appendix \section{Code}\label{appendix:code}

The code can be found on Github at \href{https://github.com/paulhamr/intersecting-families}{https://github.com/paulhamr/intersecting-families}.

\section*{References}
\begin{biblist}

\bib{AlonSpencer2016}{book}{
    author = {Alon, Noga},
    author = {Spencer, Joel H.},
    title = {The Probabilistic Method},
    edition = {4th},
    series = {Discrete Math and Optimization},
    publisher = {Wiley},
    address = {Hoboken, New Jersey},
    year = {2016},
}

\bib{balogh-linz}{article}{
    author = {Balogh, J\'ozsef}
    author = {Linz, William},
    title = {Short proofs of three results about intersecting systems},
    journal = {Combinatorial Theory},
    volume = {4},
    number = {1},
    year = {2024},
    doi = {https://doi.org/10.5070/C64163841},
}

\bib{christofides}{unpublished}{
    author = {Christofides, Demetres},
    title = {A counterexample to a conjecture of {S}imonovits and {S}\'{o}s},
    year = {2012},
    note = {Unpublished preprint},
}

\bib{chung-graham}{article}{
    author = {Chung, Fan},
    author = {Graham, Ronald},
    author = {Frankl, P\'{e}ter},
    author = {Shearer, James},
    title = {Some intersection theorems for ordered sets and graphs},
    journal = {Journal of Combinatorial Theory, Series A},
    volume = {43},
    number = {1},
    pages = {23--37},
    year = {1986},
    issn = {0097-3165},
    doi = {https://doi.org/10.1016/0097-3165(86)90019-1},
}

\bib{ellis-filmus-friedgut}{article}{
    author = {Ellis, David},
    author = {Filmus, Yuval},
    author = {Friedgut, Ehud},
    title = {Triangle-intersecting families of graphs},
    journal = {Journal of the European Mathematical Society},
    volume = {14},
    number = {3},
    pages = {841--885},
    year = {2012},
    doi = {https://doi.org/10.48550/arXiv.1010.4909},
}

\bib{Karp1972}{inproceedings}{
    author = {Karp, Richard M.},
    title = {Reducibility among combinatorial problems},
    editor = {Miller, Raymond E. and Thatcher, James W.},
    booktitle = {Complexity of Computer Computations: Proceedings of a Symposium on the Complexity of Computer Computations},
    series = {The IBM Research Symposia Series},
    publisher = {Plenum Press},
    address = {New York, NY},
    year = {1972},
    pages = {85--103},
    doi = {10.1007/978-1-4684-2001-2\_9},
}

\end{biblist}

\end{document}